\documentclass[graybox]{svmult}

\usepackage{type1cm}
\usepackage{makeidx}
\usepackage{graphicx}
\usepackage{multicol}
\usepackage[bottom]{footmisc}
\usepackage{newtxtext}
\usepackage{newtxmath}
\usepackage{tikz}
\usetikzlibrary{calc}
\usetikzlibrary{matrix}
\usepackage{caption}


\def\P{\mathrm{P}}
\def\BDM{\mathrm{BDM}}

\def\BDFM{\mathrm{BDFM}}
\def\RT{\mathrm{RT}}
\def\tnorm{|\!|\!|}

\makeatletter
\newcommand{\setfloattype}[1]{\renewcommand{\@captype}{#1}}
\makeatother

\begin{document}

\title*{A second order finite element method with mass lumping for wave equations in $H(\mathrm{div})$}
\author{Herbert Egger and Bogdan Radu}
\institute{Herbert Egger \at TU Darmstadt, \email{egger@mathematik.tu-darmstadt.de}
\and Bogdan Radu \at TU Darmstadt, \email{bradu@mathematik.tu-darmstadt.de}}

\maketitle

\abstract{We consider the efficient numerical approximation of acoustic wave propagation in time domain by a finite element method with mass lumping. 
In the presence of internal damping, the problem can be reduced to a second order formulation in time for the velocity field alone. For the spatial approximation we consider $H(\mathrm{div})$--conforming finite elements of second order. In order to allow for an efficient time integration, we propose a mass-lumping strategy based on approximation of the $L^2$-scalar product by inexact numerical integration which leads to a block-diagonal mass matrix. A careful error analysis allows to show that second order accuracy is not reduced by the quadrature errors which is illustrated also by numerical tests.}

\section{Motivation} \label{sec:1}

The propagation of acoustic sound in channels or ducts with a small extension in one of the spatial directions is substantially damped by friction at the walls. Averaging over the small direction then leads to systems with internal damping of the form 
\begin{alignat}{2}
  \partial_{t} u + \nabla p        &= -du \qquad \label{sys1}\\ 
  \partial_{t} p + \mathrm{div}\,u &= 0   \qquad \label{sys2}
\end{alignat}
with appropriate initial and boundary conditions. 
The variables $u$ and $p$ here denote the velocity and pressure fields, respectively, and for ease of notation, the equations are written in dimensionless form. 
The right hand side in \eqref{sys1} models the drag forces and $d$ denotes the corresponding dimensionless damping or drag coefficient. 

In the absence of damping, i.e., when $d=0$, the system \eqref{sys1}-\eqref{sys2} can be reduced to the second order wave equation for the pressure
\begin{align}
	\partial_{tt} p - \Delta p=0 \label{sys3}
\end{align}
which results from differentiating \eqref{sys2} and eliminating $u$ via equation \eqref{sys1}.
The efficient discretization of \eqref{sys3} can be obtained in various ways, e.g., by finite difference or finite element methods. The latter are more flexible concerning high-order approximations and the treatment of non-trivial domains but suffer from non-diagonal mass-matrices which hinder the efficient time-integration. This can be overcome by mass-lumping; we refer to \cite{Cohen02} for an overview about various methods and to \cite{Geevers18,Mulder01} for some particular results concerning mass-lumping for finite element approximations.

In the presence of damping, i.e., if $d\neq 0$, the elimination of the velocity $u$ from \eqref{sys1}--\eqref{sys2} leads to an integro-differential equation for the pressure whose time-integration is again non-trivial. Elimination of the pressure, on the other hand, again leads to a second order differential equation 
\begin{align}
  \partial_{tt} u + d\partial_{t} u- \nabla\mathrm{div}\,u = 0 \label{sys4}
\end{align}
but now for the vector valued velocity field $u$. 
The stable discretization of \eqref{sys4} by finite elements requires the use of $H(\mathrm{div})$--conforming spaces and novel mass lumping techniques are required for the efficient time integration. We refer to \cite{Cohen02} for corresponding results for $H(\mathrm{curl})$--conforming finite-elements  required in the context of electromagnetic wave propagation.

In a recent work \cite{EggerRadu18}, we considered the lowest-order discretization of the system \eqref{sys1}-\eqref{sys2} by $\BDM_1$--$\P_0$ finite-elements with mass-lumping as suggested by Wheeler and Yotov \cite{WheelerYotov06} in the context of porous medium flow. The resulting scheme is convergent of first order and super-convergence for the projected pressure can be utilized to obtain second-order convergence for the velocity by a non-local post-processing strategy. 
In this paper, we choose finite elements with better approximation properties which lead to second order approximations in the energy norm
\begin{align} \label{eq:ee}
  \|\partial_{t}u(t)-\partial_{t}u_h(t)\|_{L^2} + \|\mathrm{div}(u(t) - u_h(t))\|_{L^2}\leq C(u) h^2
\end{align}
without the need for post-processing. A novel mass-lumping strategy is proposed to allow for the efficient time integration and a careful analysis of the quadrature error is presented in order to establish the order optimal convergence rates \eqref{eq:ee}.
We here consider only approximations of second order on hybrid meshes in two space dimensions. The basic arguments of our analysis however can be used to investigate approximations of higher order and in three space dimensions.

The remainder of this note is organized as follows: 
In Section~\ref{sec:2}, we formally state our model problem and basic assumptions and then introduce its finite element approximation. In Section~\ref{sec:3}, we present some auxiliary estimates and then formulate and prove our main result in Section~\ref{sec:4}. Details about the numerical implementation are given in Section~\ref{sec:5} and for illustration, we present in Section~\ref{sec:6} some preliminary numerical tests.

\newpage 

\section{Problem statement and finite element approximation} \label{sec:2}

Throughout the presentation, we denote by $\Omega\subseteq\mathbb{R}^2$ a bounded polygonal Lipschitz domain and by $T>0$ a finite time horizon. 
We consider the system
\begin{alignat}{2}
  \partial_{tt} u +d\partial_{t} u- \nabla\mathrm{div}\,u &= 0, \qquad &&\text{in }\Omega  \label{s1}\\ 
  n\cdot u &= 0,\qquad &&\text{on }\partial\Omega. \label{s2}
\end{alignat}
The existence of a unique solution $u$ for \eqref{s1}--\eqref{s2} with 
given initial values $u(0)=u_0$ and $\partial_{t} u(0)=u_1$ 
can be established by semigroup theory; see \cite{EggerRadu18} for details. 
Moreover, any classical solution of \eqref{s1}--\eqref{s2} satisfies the variational identity
\begin{align}
  (\partial_{tt} u(t),v) +(d\partial_{t} u(t),v) +(\mathrm{div}\,u(t),\mathrm{div}\,v) = 0, \label{sys5}
\end{align}
for all $v\in H_0(\mathrm{div},\Omega)= \{u\in L^2(\Omega)^2\,:\, \mathrm{div}\,u\in L^2(\Omega)\text{ and } n\cdot u=0\text{ on }\partial\Omega\}$.
Here and below, we use $(\cdot,\cdot)$ to denote the standard $L^2$-scalar product. 

Let $T_h$ = $\{K\}$ be a quasi-uniform mesh of $\Omega$ comprised of triangles and parallelograms and $h$ be the mesh size.
We consider local approximation spaces
\begin{alignat}{2}
V(K) = \left\{\begin{array}{ll}
\RT_1(K) , & \text{ if $K$ is a triangle,}	\vspace{0.3em}\\
\BDFM_2(K) , & \text{ if $K$ is a parallelogram,}
\end{array}\right. \label{sys8}
\end{alignat}
with vector valued polynomial spaces $\RT_1(K)$ and $\BDFM_2(K)$ as defined in \cite{BoffiBrezziFortin13}; compare with Figure~\ref{fig:elements}. 
The global approximation spaces is then defined as
\begin{align*}
V_h = \{v_h\in H_0(\mathrm{div},\Omega)\,:\,\, v_h|_K\in V(K)\}. 
\end{align*}
The scalar product on $V_h$ will be approximated by $(u,v) \simeq (u,v)_{h}:=\sum_{K}(u,v)_{h,K}$ with local contributions obtained by numerical integration according to
\begin{align}
(u,v)_{h,K} = |K| \left(\alpha (u(m_K) \cdot v(m_K))+ \sum\nolimits_i\;\beta\,(u(v_{K,i})\cdot v(v_{K,i}))\right)
\label{sys9}
\end{align}
Here $m_K$ and $v_{K,i}$ represent the midpoint and vertices of the element $K$, respectively, while $\alpha$ and $\beta$ are the corresponding weights. 
\begin{figure}[ht!]
  \centering
  \begin{tikzpicture}[scale=0.42]
    \draw (0,0) -- (3,0);
    \draw (0,0) -- (0,3);
    \draw (3,0) -- (0,3);
    \draw[thick,->] (0,3) -- (0.5,3.5);
    \draw[thick,->] (3,0) -- (3.5,0.5);
    \draw[thick,->] (0,0) -- (-0.70,0);
    \draw[thick,->] (0,3) -- (-0.70,3);
    \draw[thick,->] (0,0) -- (0,-0.70);
    \draw[thick,->] (3,0) -- (3,-0.70);
    \draw[thick,->] (1,1) -- (1,1.70);
    \draw[thick,->] (1,1) -- (1.70,1);
    \draw[fill,red] (0,0) circle (0.12cm);
    \draw[fill,red] (3,0) circle (0.12cm);
    \draw[fill,red] (0,3) circle (0.12cm);
    \draw[fill,red] (1,1) circle (0.12cm);
  \end{tikzpicture}
  \qquad\qquad
  \centering
  \begin{tikzpicture}[scale=0.42]
    \draw (0,0) -- (3,0);
    \draw (0,0) -- (0,3);
    \draw (3,0) -- (3,3);
    \draw (0,3) -- (3,3);
    \draw[thick,->] (0,0) -- (-0.7,0);
    \draw[thick,->] (0,0) -- (0,-0.7);
    \draw[thick,->] (0,3) -- (0,3.7);
    \draw[thick,->] (0,3) -- (-0.7,3);
    \draw[thick,->] (3,0) -- (3.7,0);
    \draw[thick,->] (3,0) -- (3,-0.70);
    \draw[thick,->] (3,3) -- (3.7,3);
    \draw[thick,->] (3,3) -- (3,3.7);
    \draw[thick,->] (1.5,1.5) -- (1.5,2.2);
    \draw[thick,->] (1.5,1.5) -- (2.2,1.5);
    \draw[fill,red] (0,0) circle (0.12cm);
    \draw[fill,red] (3,0) circle (0.12cm);
    \draw[fill,red] (0,3) circle (0.12cm);
    \draw[fill,red] (3,3) circle (0.12cm);
    \draw[fill,red] (1.5,1.5) circle (0.12cm);
  \end{tikzpicture}
  \caption{\footnotesize Degrees of freedom for $\RT_1$ (left) and $\BDFM_2$ (right) and quadrature points (red dots).} \label{fig:elements}
\end{figure}
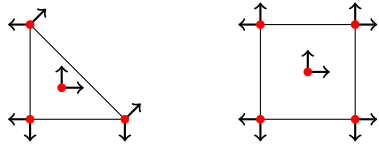%
On triangles, we choose $\alpha = \tfrac{3}{4}$ and $\beta = \tfrac{1}{12}$, while on parallelograms, we choose $\alpha = \tfrac{2}{3}$ and $\beta = \tfrac{1}{12}$. 
For the space discretization of \eqref{sys5}, we then consider the following inexact Galerkin scheme.
\begin{problem}\label{p:2}
Let $u_{h,0}$, $u_{h,1} \in V_h$ be given. 
Find $u_h:[0,T]\rightarrow V_h$ such that
\begin{align}
(\partial_{tt} u_h(t),v_h)_h + (d\partial_{t} u_h,v_h)_h + (\mathrm{div}\,u_h(t),\mathrm{div}\,v_h) = 0  \label{sys6}
\end{align}
for all $v_h\in V_h$ and all $t\in[0,T]$ and such that $u_h(0)=u_{h,0}$ and $\partial_{t} u_h(0)=u_{h,1}$.
\end{problem}
The following result ensures the well-posedness of Problem~\ref{p:2}.
\begin{lemma}
The inexact scalar product $(\cdot,\cdot)_h$ induces a norm on $V_h$ and, as a consequence, Problem~\ref{p:2} admits a unique solution. 
\end{lemma}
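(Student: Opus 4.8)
The plan is to show that the inexact scalar product $(\cdot,\cdot)_h$ is in fact positive definite on $V_h$; once this is established, the quadratic form $v_h \mapsto (v_h,v_h)_h$ is a genuine inner product, the mass matrix $M_h$ it induces is symmetric positive definite, and \eqref{sys6} becomes an ordinary linear second-order ODE system $M_h \ddot{\mathbf{u}} + d M_h \dot{\mathbf{u}} + A_h \mathbf{u} = 0$ with invertible $M_h$, so the Picard--Lindel\"of theorem yields a unique global solution with the prescribed initial data. The only real content is therefore the definiteness claim.

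First I would reduce the question to a single element $K$ by the splitting $(\cdot,\cdot)_h = \sum_K (\cdot,\cdot)_{h,K}$: since each local form \eqref{sys9} is a nonnegative combination of squared norms of point values, $(v_h,v_h)_h \geq 0$ always, and $(v_h,v_h)_h = 0$ forces $(v_h,v_h)_{h,K} = 0$ for every $K$, hence $v_h(m_K) = 0$ and $v_h(v_{K,i}) = 0$ at the midpoint and all vertices of each $K$. So it suffices to prove the purely local statement: if $v \in V(K)$ vanishes at the midpoint and at all the vertices of $K$, then $v \equiv 0$. Equivalently, the map sending $v \in V(K)$ to its values at those $d+2$ points (three vertices plus midpoint on a triangle, four vertices plus midpoint on a parallelogram) is injective. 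A dimension count is encouraging but not sufficient: $\dim \RT_1(K) = 8$ and there are $4$ sample points giving $8$ scalar conditions, while $\dim \BDFM_2(K) = 12$ with $5$ points giving $10$ conditions — so on parallelograms injectivity cannot come from counting alone and one must use the structure of $\BDFM_2$.

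The main step — and the main obstacle — is thus the element-level injectivity. For the triangle $\RT_1$ case I would work on the reference triangle and simply expand a general $v = \mathbf{a} + bx + \text{(the } \RT\text{ completion terms)}$, impose vanishing at the three vertices and the barycenter, and solve the resulting linear system to conclude $v=0$; this is a short explicit computation (it is precisely why the weights $\alpha=3/4$, $\beta=1/12$ reproduce the exact integral of quadratics and the quadrature is unisolvent on $\RT_1$). For the $\BDFM_2$ parallelogram I would again pass to the reference square and use that $\BDFM_2$ is $(\mathbb{P}_2)^2$ augmented by a controlled set of cubic terms with specific structure (see \cite{BoffiBrezziFortin13}); the five nodal evaluations do not determine all $12$ coefficients directly, but combined with the $H(\mathrm{div})$-conformity already built into $V_h$ through continuity of the normal component across edges one gets enough constraints. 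Alternatively, and more cleanly, I would avoid claiming injectivity of the bare point-evaluation map and instead argue: on a parallelogram the normal trace of $v \in \BDFM_2(K)$ on each edge is a polynomial of degree $\le 1$ determined by its values at the two endpoints (vertices), so $v(v_{K,i})=0$ for all $i$ forces $n\cdot v = 0$ on $\partial K$; one then checks directly on the reference square that a $\BDFM_2$ field with vanishing normal trace on the boundary and vanishing value at the midpoint must be zero. This last verification is the computational heart of the lemma and is where I would expect to spend the most effort; everything else is bookkeeping.

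Finally, having the local results, I would reassemble: $(v_h,v_h)_h = 0 \Rightarrow v_h|_K = 0$ for all $K \Rightarrow v_h = 0$, so $(\cdot,\cdot)_h$ is definite, $\|v_h\|_h := (v_h,v_h)_h^{1/2}$ is a norm on $V_h$, and the ODE argument above closes the proof.
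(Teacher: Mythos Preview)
Your overall strategy---reduce to positive definiteness, localize to a single element, verify injectivity of point evaluation, then invoke Picard--Lindel\"of---is exactly what the paper does, only the paper compresses the element-level verification to the phrase ``elementary computations on single elements.''

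There is, however, a factual slip that makes you overcomplicate the parallelogram case. You state $\dim\BDFM_2(K)=12$, but on a parallelogram the space has dimension~$10$: starting from $(P_2)^2$ (dimension $12$) the requirement $v\cdot n|_e\in P_1(e)$ imposes only two independent constraints (the $x^2$ coefficient of $v_2$ and the $y^2$ coefficient of $v_1$ vanish), since opposite edges give the same condition. This matches the $10$ basis functions listed in Section~\ref{sec:5}. With the correct count you have $5$ quadrature points times $2$ components $=10$ conditions against a $10$-dimensional space, so injectivity \emph{can} be checked by the same direct linear-algebra computation you propose for triangles; on the reference square it is a few lines. Your more elaborate normal-trace detour would also work, but it is unnecessary, and the remark about invoking global $H(\mathrm{div})$-conformity ``across edges'' is out of place in a purely local argument.
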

\begin{proof}
Choose any basis for $V_h$. Then the mass matrix associated with the inexact scalar product $(\cdot,\cdot)_h$ is symmetric and positive definite; this can be verified by elementary computations on single elements. Existence of a unique solution then follows from the Picard-Lindel\"of theorem.
\end{proof}

\vspace*{-2em}
\section{Auxiliary results} \label{sec:3}

In the following, we recall some well-known interpolation results 
and then derive estimates for the quadrature error which will be required below. 
Let us start with introducing a canonical interpolation operator which is defined locally by 
\begin{alignat}{2}
(\Pi_h u)|_K = \left\{\begin{array}{ll}
\Pi^{\RT}_K u|_K , & \text{ if $K$ is a triangle,}	\vspace{0.3em}\\
\Pi^{\BDFM}_K u|_K , & \text{ if $K$ is a parallelogram.}
\end{array}\right. \label{sys8}
\end{alignat}
Here $\Pi_K^{\RT}$ and $\Pi_K^{\BDFM}$ denote the standard interpolation operators for the local finite element spaces $\RT_1(K)$ and $\BDFM_2(K)$, respectively; see \cite{BoffiBrezziFortin13} for details. 
The following assertions then follow from well-known results about the local operators. 
\begin{lemma}\label{lem:interp}
Let $K\in T_h$ and $\Pi_h$ be defined as in \eqref{sys8}. Then
\begin{align}
  \|u-\Pi_h u\|_{L^2(K)}&\leq Ch^2\|u\|_{H^2(K)},
\end{align}
for all $u \in H(\mathrm{div},\Omega) \cap H^2(T_h)^2$ with constant $C$  independent of $h$. Moreover
\begin{align}
  (\mathrm{div}(u-\Pi_h u),\mathrm{div}\,v_h)=0,\qquad\forall\,v_h\in V_h.
\end{align}
\end{lemma}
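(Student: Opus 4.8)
The plan is to reduce both assertions to standard properties of the canonical interpolation operators $\Pi_K^{\RT}$ and $\Pi_K^{\BDFM}$ on a single element, which are collected in \cite{BoffiBrezziFortin13}. For the first estimate, I would work element by element: by the Bramble--Hilbert lemma applied on a reference element $\widehat K$ (the reference triangle or the unit square) together with a scaling argument via the Piola transform, one obtains $\|u-\Pi_K u\|_{L^2(K)}\le C h^2 |u|_{H^2(K)}$ for each fixed element shape, provided the local interpolant reproduces the full polynomial space $(P_1)^2$. This is the case here: $(P_1)^2\subseteq\RT_1(K)$ on triangles and $(P_1)^2\subseteq\BDFM_2(K)$ on parallelograms, and both $\Pi_K^{\RT}$ and $\Pi_K^{\BDFM}$ are projections onto the respective spaces, so polynomials of degree $\le 1$ are left invariant. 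Summing the squared local estimates over all $K\in T_h$ and using quasi-uniformity to absorb the shape-dependent constants into a single $C$ gives the claimed global bound with $\|u\|_{H^2(K)}$ on the right.

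For the second assertion, the key fact is the commuting diagram property of these $H(\mathrm{div})$-conforming elements: one has $\operatorname{div}(\Pi_K u) = P_K^{0}(\operatorname{div} u)$ on each $K$, where $P_K^0$ denotes the $L^2(K)$-orthogonal projection onto $\operatorname{div} V(K)$. Concretely, $\operatorname{div}\RT_1(K)=P_1(K)$ on a triangle and $\operatorname{div}\BDFM_2(K)=P_1(K)$ on a parallelogram, and $\operatorname{div} V_h$ is a subspace of the discontinuous piecewise-$P_1$ space. Hence $\operatorname{div}(u-\Pi_h u)$ is, on each element, $L^2$-orthogonal to $\operatorname{div} V(K)$. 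Since for any $v_h\in V_h$ the function $\operatorname{div} v_h$ restricted to $K$ lies in $\operatorname{div} V(K)$, summing the local orthogonality relations yields $(\operatorname{div}(u-\Pi_h u),\operatorname{div} v_h)=\sum_K(\operatorname{div}(u-\Pi_h u),\operatorname{div} v_h)_{L^2(K)}=0$.

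I do not expect a genuine obstacle here, as everything rests on textbook properties of $\RT$ and $\BDFM$ elements; the only points requiring a little care are verifying the polynomial inclusions $(P_1)^2\subseteq\RT_1(K)$ and $(P_1)^2\subseteq\BDFM_2(K)$ that justify the $O(h^2)$ rate, and checking that the two different element types can be treated with a uniform constant, which follows from the mesh being quasi-uniform and from both element families being affine-equivalent (via the Piola transform) to a fixed pair of reference elements. One should also note in passing that $\Pi_h u$ is well defined and lies in $V_h\subseteq H_0(\operatorname{div},\Omega)$: the matching of normal degrees of freedom across interelement edges guarantees $H(\operatorname{div})$-conformity, and the regularity $u\in H(\operatorname{div},\Omega)\cap H^2(T_h)^2$ is more than enough to make the functionals defining $\Pi_K^{\RT}$ and $\Pi_K^{\BDFM}$ (edge moments and interior moments of $u$) well defined.
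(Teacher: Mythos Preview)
Your proposal is correct and matches the paper's approach: the paper gives no detailed proof but simply states that the assertions follow from well-known results about the local operators $\Pi_K^{\RT}$ and $\Pi_K^{\BDFM}$, citing \cite{BoffiBrezziFortin13}. Your sketch via Bramble--Hilbert plus Piola scaling for the approximation estimate and the commuting diagram property for the divergence orthogonality is exactly the standard argument behind that citation.
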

We will further require the following property of the spaces $\RT_1(K)$ on triangles. 
\begin{lemma}\label{lem:split}
Let $K$ be a triangle. Then there exists a unique splitting 
\begin{align}
  \RT_1(K)=P_1(K)^2\oplus B(K)
\end{align}
and $\dim(B(K))=\dim(\mathrm{div}(B(K)))$. Therefore, $\|\mathrm{div}(\cdot)\|_{L^2}$ defines a norm on $B(K)$ and $ \|\mathrm{div}\,v_h^B\|_{L^2(K)} \ge c \|\nabla v_h^B\|_{L^2(K)}$ for any $v_h^B \in B(K)$ with $c$ independent of $K$.
\end{lemma}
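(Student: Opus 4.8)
The plan is to reduce everything to a fixed reference triangle $\widehat K$ and to transport the relevant quantities to a general element $K$ by the Piola transform, which is the natural isomorphism for $H(\mathrm{div})$ and which maps $\RT_1(\widehat K)$ onto $\RT_1(K)$ and $P_1(\widehat K)^2$ onto $P_1(K)^2$. First I recall from \cite{BoffiBrezziFortin13} that $\RT_1(K) = P_1(K)^2 \oplus \widetilde P_1(K)\,x$, where $\widetilde P_1$ denotes the homogeneous polynomials of degree one and $x$ the position vector; this exhibits an explicit complement $B(K) := \widetilde P_1(K)\,x$ of $P_1(K)^2$ (equivalently one fixes $B(\widehat K)$ on the reference triangle and sets $B(K)$ to be its Piola transform). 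Since $\dim\RT_1(K) = 8 = \dim P_1(K)^2 + 2$, the sum is direct, so every $v\in\RT_1(K)$ decomposes uniquely as $v = v^P + v^B$ with $v^P\in P_1(K)^2$ and $v^B\in B(K)$.

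Next I would compute the divergence on $B(K)$. By Euler's identity, for $p\in\widetilde P_1(K)$ one has $\mathrm{div}(p\,x) = \nabla p\cdot x + p\,\mathrm{div}\,x = p + 2p = 3p$, so $\mathrm{div}$ restricted to $B(K)$ is a linear bijection onto $\widetilde P_1(K)$. Hence $\dim B(K) = \dim\mathrm{div}(B(K)) = 2$, $\mathrm{div}$ is injective on $B(K)$, and $v^B\mapsto\|\mathrm{div}\,v^B\|_{L^2(K)}$ is a norm on $B(K)$. (For an arbitrary complement the same follows since $\ker(\mathrm{div}|_{\RT_1(K)})$ has dimension $8-3=5=\dim\ker(\mathrm{div}|_{P_1(K)^2})$, hence coincides with the latter and lies in $P_1(K)^2$, so $B(K)\cap\ker\mathrm{div}=\{0\}$.) Note also that $B(K)\cap P_1(K)^2=\{0\}$ forces $B(K)$ to contain no nonzero constant field, so that $v^B\mapsto\|\nabla v^B\|_{L^2(K)}$ is a norm on $B(K)$ as well.

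For the estimate $\|\mathrm{div}\,v^B\|_{L^2(K)}\ge c\,\|\nabla v^B\|_{L^2(K)}$ with $c$ independent of $K$, I would argue on $\widehat K$ first: since both maps above are norms on the finite-dimensional space $B(\widehat K)$, equivalence of norms gives $\|\widehat{\mathrm{div}}\,\widehat v^B\|_{L^2(\widehat K)}\ge\widehat c\,\|\widehat\nabla\,\widehat v^B\|_{L^2(\widehat K)}$ with $\widehat c>0$ depending only on $\widehat K$. Writing $F_K(\widehat x)=B_K\widehat x+b_K$ for the affine map $\widehat K\to K$ and using the Piola transform together with the standard scaling relations $\|\mathrm{div}\,v^B\|_{L^2(K)} = |\det B_K|^{-1/2}\|\widehat{\mathrm{div}}\,\widehat v^B\|_{L^2(\widehat K)}$ and $\|\nabla v^B\|_{L^2(K)}\le|\det B_K|^{-1/2}\,\|B_K\|\,\|B_K^{-1}\|\,\|\widehat\nabla\,\widehat v^B\|_{L^2(\widehat K)}$, the claim follows with $c=\widehat c/(\|B_K\|\,\|B_K^{-1}\|)$, which is bounded below uniformly because on the quasi-uniform mesh $T_h$ the condition number $\|B_K\|\,\|B_K^{-1}\|$ is bounded independently of $K$.

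The only genuinely delicate point is the uniformity of $c$; the rest is linear algebra on a single triangle. The care lies in the Piola bookkeeping — the divergence scales exactly like $|\det B_K|^{-1/2}$, whereas the gradient additionally picks up the condition number $\|B_K\|\,\|B_K^{-1}\|$ — and in invoking the mesh regularity to bound this condition number by a mesh-independent constant.
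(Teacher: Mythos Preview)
Your proposal is correct and follows exactly the route the paper indicates: elementary computations on the reference triangle together with a mapping (Piola) argument to obtain uniformity in $K$. One small inaccuracy: the two candidate definitions of $B(K)$ you mention---$\widetilde P_1(K)\,x$ versus the Piola image of $B(\widehat K)$---do not literally coincide when the affine map $F_K$ has nonzero translation part, so ``equivalently'' is too strong; but since your scaling argument uses the Piola definition and your parenthetical dimension count shows $\mathrm{div}$ is injective on \emph{any} complement of $P_1(K)^2$, the proof goes through unchanged.
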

These assertions can be verified by a elementary computations on the reference element and a mapping argument. 
As a next step, we summarize some properties of the numerical integration underlying the definition \eqref{sys9} of the inexact scalar product.
\begin{lemma}\label{lem:ex}
The quadrature rule in \eqref{sys9} is exact for polynomials of degree $k\le 2$ on triangles and for polynomials of degree $k\le 3$ on parallelograms.
\end{lemma}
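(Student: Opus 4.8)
The plan is to transfer the claim to reference elements by an affine change of variables, where it reduces to checking finitely many polynomial identities. Since the integrand in \eqref{sys9} is the scalar function $x\mapsto u(x)\cdot v(x)$, it suffices to prove that, for a \emph{scalar} polynomial $q$ of the indicated degree, the functional $Q_K(q):=|K|\bigl(\alpha\,q(m_K)+\sum_i\beta\,q(v_{K,i})\bigr)$ equals $I_K(q):=\int_K q\,dx$. I would introduce the affine bijection $F_K(\hat x)=B_K\hat x+b_K$ from the reference triangle $\widehat K$ (respectively the reference unit square) onto $K$. It maps the vertices of $\widehat K$ to the vertices of $K$ and the barycenter (respectively the center) of $\widehat K$ to $m_K$, one has $|K|=|\det B_K|\,|\widehat K|$, and the substitution rule gives $I_K(q)=|\det B_K|\,I_{\widehat K}(q\circ F_K)$; hence
\[
Q_K(q)-I_K(q)=|\det B_K|\,\bigl(Q_{\widehat K}(q\circ F_K)-I_{\widehat K}(q\circ F_K)\bigr).
\]
Since $B_K$ is linear, $q\mapsto q\circ F_K$ maps the space $\mathbb{P}_k$ of polynomials of total degree at most $k$ onto itself, so it is enough to establish exactness of $Q_{\widehat K}$ on the reference element.

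On the reference triangle, with the barycenter as interior node and weights $\alpha=\tfrac34$, $\beta=\tfrac1{12}$, I would evaluate $Q_{\widehat K}$ and $I_{\widehat K}$ on the monomial basis $1,x,y,x^2,xy,y^2$ of $\mathbb{P}_2$ and check that they agree; a single further computation, say for $x^3$, shows that the rule fails to be exact on $\mathbb{P}_3$, so degree two is sharp here. On the reference unit square, with the center as interior node and weights $\alpha=\tfrac23$, $\beta=\tfrac1{12}$, the same procedure applied to the ten monomials spanning $\mathbb{P}_3$ gives exactness up to degree three; one in fact also obtains exactness for $x^3y$ and $xy^3$ but not for $x^2y^2$, so degree three is the best total-degree statement. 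A convenient sanity check at the outset is that the chosen weights are already normalized so that $Q_{\widehat K}(1)=|\widehat K|$, which is of course nothing but the degree-zero identity.

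I do not expect any real difficulty. The argument is the classical ``verify on the reference element, then pull back'' scheme, and the only two points needing a line of justification are that $F_K$ respects the quadrature nodes and weights, which is immediate from the structure of \eqref{sys9}, and that composition with an affine map preserves total polynomial degree, because $B_K$ contributes no constant term. What remains is purely bookkeeping: making sure that every basis monomial of $\mathbb{P}_2$, respectively $\mathbb{P}_3$, is actually checked, exactly the kind of elementary computation alluded to after Lemma~\ref{lem:split}.
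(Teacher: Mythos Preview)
Your proposal is correct and is precisely the approach the paper has in mind: it merely states that ``the validity of these claims can again be verified by elementary computations on reference elements,'' and you have spelled out exactly that argument---pull back by the affine map $F_K$ (which is legitimate because parallelograms, unlike general quadrilaterals, are affine images of the unit square), observe that total degree is preserved, and check the finitely many monomials on $\widehat K$.
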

The validity of these claims can again be verified by elementary computations on reference elements. 
In the following, we will abbreviate the quadrature errors by
\begin{align} \label{sys10}
\sigma_K(u,v)\coloneqq (u,v)_{h,K}-(u,v)_K 
\quad\text{and}\quad
\sigma_h(u,v) = \sum\nolimits_{K\in T_h}\sigma_T(u,v) 
\end{align}
Moreover, we denote by $\pi_K^k : L^2(K) \to P_k(K)^2$ the local $L^2$-orthogonal projections and we use $\pi_h^k : L^2(\Omega) \to P_k(T_h)^2$ to denote the corresponding global projection.
\begin{lemma}\label{lem:quaderr}
Let $u \in L^2(\Omega)^2$ with $u|_K \in H^1(K)^2$ for all $K \in T_h$. 
Then 
\begin{alignat}{2}
|\sigma_K(\pi_h^1 u, v_h)|
\leq \left\{\begin{array}{lll}
C h^2\|u\|_{H^1(K)}\|\mathrm{div}\,v_h\|_{L^2(K)}  &\text{ if $K$ is a triangle,}	\vspace{0.1em}\\
0,  &\text{ if $K$ is a parallelogram,}
\end{array}\right.
\end{alignat}
for all $v_h\in V_h$ and all $K\in T_h$ with constant $C$ independent of the element $K$.
\end{lemma}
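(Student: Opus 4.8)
The plan is to split the analysis into the two cases dictated by the element type, since on parallelograms the quadrature rule is exact for polynomials of degree $k\le 3$ by Lemma~\ref{lem:ex}. First I would treat the easy case: if $K$ is a parallelogram, then $\pi_h^1 u|_K \in P_1(K)^2$ and $v_h|_K \in \BDFM_2(K)$, so the product $\pi_h^1 u \cdot v_h$ is a polynomial of degree at most $3$ (one checks that the highest-degree monomials of $\BDFM_2$ paired against linears stay within degree $3$ on a parallelogram). Hence $\sigma_K(\pi_h^1 u, v_h)=0$, which is exactly the claimed bound. One subtlety worth a sentence: on a genuine parallelogram (as opposed to a general quadrilateral) the Piola transform preserves the relevant polynomial structure, so exactness on the reference square transfers directly.

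Second, and this is the main case, let $K$ be a triangle. Here $v_h|_K \in \RT_1(K)$, and I would invoke the splitting of Lemma~\ref{lem:split}, writing $v_h = v_h^P + v_h^B$ with $v_h^P \in P_1(K)^2$ and $v_h^B \in B(K)$. For the $P_1$ component, $\pi_h^1 u \cdot v_h^P$ is a polynomial of degree $\le 2$, so the quadrature is exact by Lemma~\ref{lem:ex} and that contribution to $\sigma_K$ vanishes. Everything therefore reduces to estimating $\sigma_K(\pi_h^1 u, v_h^B)$. For this I would use a Bramble--Hilbert / standard quadrature-error argument on the reference element: the quadrature error functional $(\cdot,\cdot)_{\widehat K} - (\cdot,\cdot)_{h,\widehat K}$ vanishes on products landing in $P_2$, so by the Bramble--Hilbert lemma $|\sigma_{\widehat K}(\widehat w, \widehat z)| \le C\,\|\widehat w\|_{H^1(\widehat K)}\,\|\widehat z\|_{H^1(\widehat K)}$ after using that $\pi_h^1 u$ is already affine (so only one of the factors needs an $H^1$-seminorm, the other an $H^1$-norm — in fact since $\widehat{\pi_h^1 u}$ is a polynomial it contributes an $L^\infty$-type factor controlled by $\|u\|_{H^1}$ via stability of $\pi_h^1$). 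Then I would scale back to $K$ by the affine map: $L^2$ and $H^1$ seminorms pick up the usual powers of $h$, the Jacobian factor $|K|$ appears from $(\cdot,\cdot)_K$, and the net effect is a factor $h^2$ together with $\|u\|_{H^1(K)}\,\|v_h^B\|_{H^1(K)}$ (or, keeping track more carefully, $\|\nabla v_h^B\|_{L^2(K)}$). Finally I would replace $\|\nabla v_h^B\|_{L^2(K)}$ by $\|\operatorname{div} v_h^B\|_{L^2(K)}$ using the inverse-type inequality $\|\operatorname{div} v_h^B\|_{L^2(K)} \ge c\,\|\nabla v_h^B\|_{L^2(K)}$ from Lemma~\ref{lem:split}, and use $\operatorname{div} v_h^B = \operatorname{div} v_h - \operatorname{div} v_h^P$ together with $\|\operatorname{div} v_h^P\|_{L^2(K)} \le C\|\operatorname{div} v_h\|_{L^2(K)}$ (stability of the splitting) to arrive at the stated bound in terms of $\|\operatorname{div} v_h\|_{L^2(K)}$.

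The main obstacle I anticipate is the bookkeeping in the scaling argument: one must be careful that the powers of $h$ coming from the Piola transform on $v_h^B$, the affine scaling of the $H^1$-seminorm, and the element volume $|K|$ combine to give exactly $h^2$ and not $h$ or $h^3$. The reason it works out to $h^2$ rather than $h$ is that the quadrature error kills \emph{quadratic} polynomials, so one gains \emph{two} orders from Bramble--Hilbert applied to the $\RT_1$ factor $v_h^B$ (which carries a genuine second-order bubble part), but one of these orders is ``spent'' converting the stronger $H^2$-type control back down; tracking this honestly is the delicate point. A secondary technical care point is the stability of the constants in $c$ and in the splitting under the mapping to a general (non-reference) triangle, which is where quasi-uniformity of $T_h$ and the uniform shape-regularity implicit in Lemma~\ref{lem:split} enter; I would simply cite those lemmas rather than reprove them.
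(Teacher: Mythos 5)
Your proposal is correct and follows essentially the same route as the paper: both arguments rest on the exactness of the quadrature for $P_2$ products on triangles and $P_3$ products on parallelograms (Lemma~\ref{lem:ex}), on subtracting lower-order projections from both factors to gain a factor $h^3$, and on the splitting and norm equivalence of Lemma~\ref{lem:split} together with an inverse inequality to trade one power of $h$ for the bound by $\|\mathrm{div}\,v_h\|_{L^2(K)}$. The paper merely packages your reference-element/Bramble--Hilbert step as the direct estimate $|\sigma_K(a,b)|\le\|a\|_{L^2(K)}\|b\|_{L^2(K)}+\|a\|_{h}\|b\|_{h}$ applied to $a=\pi_h^1u-\pi_h^0u$ and $b=v_h-\pi_h^1v_h$, and performs the splitting of $v_h$ at the end (to control $\|\nabla^2 v_h\|_{L^2(K)}=\|\nabla^2 v_h^B\|_{L^2(K)}$) rather than at the beginning.
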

\begin{proof}
From Lemma~\ref{lem:ex}, we deduce that $|\sigma_K(\pi_h^1 u, v_h)|=0$ on parallelograms. 
For triangles, on the other hand, we can estimate the quadrature error by
\begin{align*}
|\sigma_K(&\pi_h^1 u, v_h)|
 =|\sigma_K(\pi_h^1u-\pi_h^0u,v_h-\pi_h^1v_h)|\\
&\le \|\pi_h^1u-\pi_h^0u\|_{L^2(K)}\|v_h-\pi_h^1v_h\|_{L^2(K)} + \|\pi_h^1u-\pi_h^0u\|_{h}\|v_h-\pi_h^1v_h\|_{h}\\
&\le Ch^3\|u\|_{H^1(K)}\|\nabla^2 v_h\|_{L^2(K)}.
\end{align*}
By Lemma~\ref{lem:split}, we can split $v_h = v_h^1 \oplus v_h^B$ on $K$ and further estimate
\begin{align*}
\|\nabla^2 v_h\|_{L^2(K)}
&= \|\nabla^2 v_h^B\|_{L^2(K)}
 \le C' h^{-1} \|\nabla v_h^B\|_{L^2(K)} 
 \le C'' h^{-1} \|\mathrm{div}v_h^B\|_{L^2(K)}.
\end{align*}
The linear independence of the splitting also yields $\|\mathrm{div}v_h^B\|_{L^2(K)} \le C \|\mathrm{div}v_h\|_{L^2(K)}$, 
and a combination of the estimates already yields the bound for the triangles. 
\end{proof}

\section{Convergence analysis} \label{sec:4}

For ease of notation, we will only consider the case $d=0$ in the sequel. 
As usual, we begin with splitting the error in interpolation and discrete error components by
\begin{align}
u-u_h = (u-\Pi_h u)+(\Pi_h u-u_h) =: -\eta + \psi_h.
\end{align}
The discrete error component can be estimated as follows.
\begin{lemma}\label{lem:discerr}
Let $u$ and $u_h$ denote the solutions of \eqref{sys5} and \eqref{sys6} with initial values linked by $u_h(0)= \Pi_h u(0)$ and $\partial_t u_h(0) = \Pi_h \partial_t u(0)$. Then the discrete error satisfies
\begin{align*}
\|\partial_{t}(\Pi_hu - u_h)\|_{L^\infty(0,T;L^2(\Omega))} + \|\mathrm{div}\,(\Pi_hu - u_h)\|_{L^\infty(0,T;L^2(\Omega))} \leq C_1(u,T)h^2
\end{align*}
with constant $C_1(u,T)=C_1'(u,T)+C_1''(u,T)$ as defined in the proof below.
\end{lemma}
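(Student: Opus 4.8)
The plan is to run the standard energy argument for the discrete error $\psi_h := \Pi_h u - u_h$, but carefully tracking the quadrature perturbation through the inexact scalar product. First I would write the error equation. Testing the exact variational identity \eqref{sys5} and the discrete scheme \eqref{sys6} against $v_h\in V_h$, subtracting, and using the consistency property $(\mathrm{div}(u-\Pi_h u),\mathrm{div}\,v_h)=0$ from Lemma~\ref{lem:interp}, one obtains
\begin{align*}
(\partial_{tt}\psi_h,v_h)_h + (\mathrm{div}\,\psi_h,\mathrm{div}\,v_h)
= -\,\sigma_h(\partial_{tt}\Pi_h u, v_h) + (\partial_{tt}(u-\Pi_h u),v_h),
\end{align*}
for all $v_h \in V_h$, where I have added and subtracted $(\partial_{tt}\Pi_h u,v_h)_h$ to isolate the quadrature error $\sigma_h$. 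The two terms on the right are the consistency errors that must be shown to be of order $h^2$.

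Next I would choose the test function $v_h = \partial_t\psi_h(t)$ and integrate in time. Because $(\cdot,\cdot)_h$ is a genuine (time-independent) inner product on $V_h$ by the Lemma preceding Section~\ref{sec:3}, the left-hand side is $\tfrac12\tfrac{d}{dt}\big(\|\partial_t\psi_h\|_h^2 + \|\mathrm{div}\,\psi_h\|_{L^2}^2\big)$; integrating from $0$ to $t$ and using $\psi_h(0)=0$, $\partial_t\psi_h(0)=0$ gives the energy identity
\begin{align*}
\tfrac12\|\partial_t\psi_h(t)\|_h^2 + \tfrac12\|\mathrm{div}\,\psi_h(t)\|_{L^2}^2
= \int_0^t \Big[(\partial_{tt}(u-\Pi_h u),\partial_t\psi_h) - \sigma_h(\partial_{tt}\Pi_h u,\partial_t\psi_h)\Big]\,ds.
\end{align*}
The interpolation term is handled directly by Lemma~\ref{lem:interp}: $\|\partial_{tt}(u-\Pi_h u)\|_{L^2}\le C h^2\|\partial_{tt}u\|_{H^2(T_h)}$, and then Cauchy--Schwarz plus Young's inequality absorb $\|\partial_t\psi_h\|$ into the left side (after noting $\|\cdot\|_h$ and $\|\cdot\|_{L^2}$ are equivalent on $V_h$), contributing $C_1'(u,T)h^2$. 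For the quadrature term I would write $\partial_{tt}\Pi_h u = \pi_h^1(\partial_{tt}\Pi_h u) + (\mathrm{I}-\pi_h^1)(\partial_{tt}\Pi_h u)$ — but since $\Pi_h u$ is already locally polynomial of low degree on parallelograms and its $P_1$ part is what Lemma~\ref{lem:quaderr} controls — more precisely I would apply Lemma~\ref{lem:quaderr} with $u$ replaced by a suitable quantity so that $\sigma_h(\partial_{tt}\Pi_h u,\partial_t\psi_h)$ is bounded by $C h^2\,\|\partial_{tt} u\|_{H^1(T_h)}\,\|\mathrm{div}\,\partial_t\psi_h\|_{L^2}$. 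Here $\|\mathrm{div}\,\partial_t\psi_h\|_{L^2}$ is not directly controlled by the energy, so I would instead integrate by parts in time: $\int_0^t \sigma_h(\partial_{tt}\Pi_h u,\partial_t\psi_h)\,ds = \sigma_h(\partial_{tt}\Pi_h u,\psi_h)\big|_0^t - \int_0^t \sigma_h(\partial_{ttt}\Pi_h u,\psi_h)\,ds$, trading the time derivative onto the data and leaving $\|\mathrm{div}\,\psi_h\|_{L^2}$, which \emph{is} controlled, on the test slot. Gronwall's inequality then closes the estimate and yields the bound with $C_1''(u,T)$ depending on $\|\partial_{tt}u\|$, $\|\partial_{ttt}u\|$ in $L^\infty(0,T;H^1(T_h))$ (and $H^2(T_h)$ for the interpolation part).

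The main obstacle is the quadrature term: naively bounding $\sigma_h(\partial_{tt}\Pi_h u,\partial_t\psi_h)$ via Lemma~\ref{lem:quaderr} produces the factor $\|\mathrm{div}\,\partial_t\psi_h\|_{L^2}$, which the energy functional does \emph{not} dominate, so a direct Gronwall argument fails. The fix — integration by parts in $t$ to move the derivative off $\psi_h$ and onto the (smooth) data $\Pi_h u$, so that the quadrature error pairs against $\psi_h$ itself, whose divergence is part of the energy — is the one genuinely non-routine idea; it also forces the appearance of $\partial_{ttt}u$ in the constant and requires the boundary terms at $s=t$ to be absorbed by Young's inequality into $\|\mathrm{div}\,\psi_h(t)\|_{L^2}^2$ on the left. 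A secondary point to be careful about is that Lemma~\ref{lem:quaderr} is stated for $\sigma_K(\pi_h^1 u, v_h)$, so I must check that $\sigma_K(\partial_{tt}\Pi_h u, v_h) = \sigma_K(\pi_h^1(\partial_{tt}u), v_h)$ up to terms that vanish by exactness (Lemma~\ref{lem:ex}) on the relevant polynomial degrees, which is where the precise degrees of $\RT_1$ and $\BDFM_2$ and the exactness orders $k\le 2$, $k\le 3$ enter.
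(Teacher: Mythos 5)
Your proposal follows essentially the same route as the paper: the energy argument with test function $\partial_t\psi_h$, the vanishing of the divergence term via Lemma~\ref{lem:interp}, and — crucially — the integration by parts in time that trades $\|\mathrm{div}\,\partial_t\psi_h\|_{L^2}$ (not controlled by the energy) for $\|\mathrm{div}\,\psi_h\|_{L^2}$ at the cost of a $\partial_{ttt}u$ term; this is exactly the paper's treatment of its term $(v)$, and you correctly identify it as the non-routine step. The one place where your justification as written would fail is the final remark that $\sigma_K(\partial_{tt}\Pi_h u, v_h)$ equals $\sigma_K(\pi_h^1\partial_{tt}u, v_h)$ ``up to terms that vanish by exactness'': the difference $\sigma_K(\Pi_h\partial_{tt}u - \pi_h^1\partial_{tt}u, v_h)$ does \emph{not} vanish, since the integrand is a product of local polynomials of degree up to two, hence of total degree up to four, which exceeds the exactness orders $k\le 2$ (triangles) and $k\le 3$ (parallelograms) of Lemma~\ref{lem:ex}. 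The paper instead bounds this difference (its term $(iv)$) ``with the same arguments as $(i)$'', i.e.\ by stability of the quadrature on the finite-dimensional polynomial spaces together with the $O(h^2)$ smallness of $\Pi_h w - \pi_h^1 w$ in $L^2$ (both operators approximate $w$ to second order for $w\in H^2$), followed by Cauchy--Schwarz and Young; this produces the $\|\partial_{tt}u\|_{H^2}$ contribution to $C_1'$ and requires no integration by parts since it pairs against $\|\partial_t\psi_h\|_{L^2}$, which the energy does control. With that repair your argument is the paper's proof; the consistent sign flip in your error equation and the invocation of Gronwall rather than direct absorption after taking the supremum are immaterial.
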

\begin{proof}
The discrete error $\psi_h=\Pi_hu - u_h$ can be seen to satisfy the identities
\begin{align}
  (\partial_{tt}\psi_h(t),v_h)&+(\mathrm{div}\,\psi_h(t),\mathrm{div}\,v_h) = \\ &(\partial_{tt}\eta(t),v_h)+(\mathrm{div}\,\eta(t),\mathrm{div}\,v_h) +
  \sigma_h(\Pi_h\partial_{tt} u(t),v_h)
\end{align}
for all $v_h \in V_h$ and $0 \le t \le T$. Moreover,  $\psi_h(0)=\partial_{t} \psi_h(0)=0$ by construction. 
Choosing $v_h=\partial_t \psi_h(t)$ as a test function followed by integrating from $0$ to $t$ leads to
\begin{align} \label{eq:identity}
&\frac{1}{2}\left(\|\partial_{t}\psi_h(t)\|_h^2+\|\mathrm{div}\,\psi_h(t)\|_{L^2(\Omega)}^2\right)\\
&\quad =\smallint_0^t(\partial_{tt}\eta(s),\partial_{t}\psi_h(s)) + (\mathrm{div}\,\eta(s),\mathrm{div}\,\partial_{t}\psi_h(s)) + 
\sigma_h(\Pi_h\partial_{tt} u(s),\partial_{t}\psi_h(s)) \, ds \notag\\
&\quad =:(i)+(ii)+(iii). \notag
\end{align}
Using Cauchy-Schwarz and Young's inequalities, the first term can be estimated by
\begin{align}
(i) \leq C_1'(u)^2h^4 + \tfrac{1}{4}\|\partial_{t}\psi_h\|^2_{L^\infty(0,t,L^2(\Omega))}
\end{align}
with constant $C_1'(u,t)=C\|\partial_{tt} u\|_{L^1(0,t,H^2(\Omega))}$,
and by Lemma~\ref{lem:interp}, we get $(ii)=0$. 
The remaining third term can finally be estimated by
\begin{align*}
(iii)&=\smallint_0^t \sigma_h(\Pi_h\partial_{tt} u(s)-\pi_h^1\partial_{tt} u(s),\partial_{t}\psi_h(s)) +
\smallint_0^t \sigma_h(\pi_h^1\partial_{tt} u(s),\partial_{t}\psi_h(s))\\
&=:(iv)+(v).
\end{align*}
The term $(iv)$ can be bounded with the same arguments $(i)$. 
If $K$ is a parallelogram, then $(v)\equiv 0$ by Lemma~\ref{lem:quaderr}. 
On triangles, we use integration-by-parts in time, to get 
\begin{align*}
(v)&=\sigma_h(\pi_h^1\partial_{tt} u(t),\psi_h(t))-\smallint_0^t \sigma_h(\pi_h^1\partial_{ttt} u(s),\psi_h(s))\,ds\\
&\leq 
C_1''(u,t)^2h^4 + \tfrac{1}{2}\|\mathrm{div}\,\psi_h\|^2_{L^\infty(0,t,L^2(\Omega))}
\end{align*}
with $C_1''(u,t)=C(\|\partial_{tt} u\|_{L^\infty(0,t,H^1(\Omega))}+\|\partial_{ttt} u\|_{L^1(0,t,H^1(\Omega))})$, where we used Lemma~\ref{lem:quaderr} in the second step.
Taking the supremum over $t\in[0,T]$ in \eqref{eq:identity} and absorbing all the terms with the test function into the left side of \eqref{eq:identity} now yields the assertion.
\end{proof}
\begin{theorem}
Let the assumptions of Lemma~\ref{lem:discerr} hold. 
Then
\begin{align}
\|\partial_{t} (u-u_h)\|_{L^\infty(0,T;L^2(\Omega))} + \|\mathrm{div}(u-u_h)\|_{L^\infty(0,T;L^2(\Omega))} \leq C(u,T)h^2,
\end{align}
with constant $C(u,T)=C_1'(u,T)+C_1''(u,T)+C_2(u,T)$ as in the proof below.
\end{theorem}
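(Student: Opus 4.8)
The plan is to combine the interpolation-error bound from Lemma~\ref{lem:interp} with the discrete-error bound from Lemma~\ref{lem:discerr} via the triangle inequality, splitting
\begin{align*}
u - u_h = (u - \Pi_h u) + (\Pi_h u - u_h) = -\eta + \psi_h,
\end{align*}
so that
\begin{align*}
\|\partial_t(u-u_h)\|_{L^\infty(0,T;L^2)} + \|\mathrm{div}(u-u_h)\|_{L^\infty(0,T;L^2)}
\le \big(\|\partial_t\eta\| + \|\mathrm{div}\,\eta\|\big) + \big(\|\partial_t\psi_h\| + \|\mathrm{div}\,\psi_h\|\big),
\end{align*}
where all norms are in $L^\infty(0,T;L^2(\Omega))$. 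The second parenthesis is bounded by $C_1(u,T)h^2$ by Lemma~\ref{lem:discerr}, which accounts for the $C_1' + C_1''$ part of the constant. It remains to bound the interpolation-error part by $C_2(u,T)h^2$.

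For the interpolation-error contribution, I would argue as follows. The divergence term is handled by the commuting-diagram property: $\mathrm{div}(u - \Pi_h u) = \mathrm{div}\,u - \pi_h^0 \mathrm{div}\,u$ (or the analogous statement for the mixed mesh), so $\|\mathrm{div}\,\eta(t)\|_{L^2} \le C h^2 \|\mathrm{div}\,u(t)\|_{H^2}$ by standard estimates for the $L^2$-projection onto the divergence space; taking the supremum over $t$ gives $C\|u\|_{L^\infty(0,T;H^3(\mathrm{div}))}$-type control. For the time-derivative term, since $\Pi_h$ is a linear operator acting only in space, it commutes with $\partial_t$, so $\partial_t \eta = \partial_t u - \Pi_h \partial_t u$, and Lemma~\ref{lem:interp} applied to $\partial_t u(t)$ gives $\|\partial_t \eta(t)\|_{L^2(K)} \le C h^2 \|\partial_t u(t)\|_{H^2(K)}$; summing over $K$ and taking the supremum over $t$ yields $\|\partial_t \eta\|_{L^\infty(0,T;L^2)} \le C h^2 \|\partial_t u\|_{L^\infty(0,T;H^2)}$. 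Setting $C_2(u,T) = C(\|\partial_t u\|_{L^\infty(0,T;H^2(T_h))} + \|\mathrm{div}\,u\|_{L^\infty(0,T;H^2(T_h))})$ then closes the estimate.

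This argument is essentially routine once Lemmas~\ref{lem:interp} and~\ref{lem:discerr} are in hand; there is no real obstacle, only bookkeeping. The one point requiring mild care is justifying that $\Pi_h$ commutes with time differentiation and that the relevant regularity of $u$ (namely $\partial_t u, \partial_{tt} u \in L^\infty(0,T;H^2)$ and $\partial_{ttt} u \in L^1(0,T;H^1)$, as already invoked implicitly through $C_1', C_1''$) is available from the semigroup solution theory referenced for \eqref{s1}--\eqref{s2}; one simply assumes the data $u_0, u_1$ are smooth enough. The final constant is then $C(u,T) = C_1'(u,T) + C_1''(u,T) + C_2(u,T)$ as claimed, and the triangle inequality assembles the two pieces.
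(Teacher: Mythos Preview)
Your approach is essentially identical to the paper's: split $u-u_h=-\eta+\psi_h$, invoke Lemma~\ref{lem:discerr} for $\psi_h$, and bound $\eta$ via Lemma~\ref{lem:interp} and the commuting-diagram property, arriving at the same constant $C_2(u,T)=C(\|\partial_t u\|_{L^\infty(0,T;H^2)}+\|\mathrm{div}\,u\|_{L^\infty(0,T;H^2)})$. The only slip is that for $\RT_1$/$\BDFM_2$ the divergence range is $P_1(K)$, so the commuting projection is $\pi_h^1$ rather than $\pi_h^0$; with that correction the $h^2$ rate you state for $\|\mathrm{div}\,\eta\|$ is exactly right.
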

\begin{proof}
Using Lemma~\ref{lem:interp}, we can estimate the interpolation error by
\begin{align}
\|\partial_{t} \eta\|_{L^\infty(0,T;L^2(\Omega))} + \|\mathrm{div}\,\eta\|_{L^\infty(0,T;L^2(\Omega))} \leq C_2(u,T)h^2,
\end{align}
with $C_1(u,t)=C(\|\partial_{t} u\|_{L^\infty(0,t;H^2(\Omega)}+\|\mathrm{div}\,u\|_{L^\infty(0,t;H^2(\Omega)})$. The proof is completed by adding the bounds for the discrete error components provided by Lemma~\ref{lem:discerr}.
\end{proof}

\section{Implementation and mass lumping}\label{sec:5}

For completeness, we now briefly introduce appropriate basis functions for the spaces $\RT_1(K)$ and $\BDFM_2(K)$ which together with the inexact scalar product $(\cdot,\cdot)_h$ lead to a block-diagonal mass matrix. 
Let $\{\lambda_i\}$ denote the barycentric coordinates of the element $K$ and let $\nabla^\perp f = (\partial_y f,-\partial_x f)^T$. 
On triangles, we define
\begin{align*}
  \Phi_{B1} = \lambda_2(\lambda_1\nabla^\perp \lambda_3-\lambda_3\nabla^\perp \lambda_1)
  \quad\text{ and }\quad
  \Phi_{B2} = \lambda_3(\lambda_1\nabla^\perp \lambda_2-\lambda_2\nabla^\perp \lambda_1)
\end{align*}
which are the two $H(\mathrm{div})$-bubble functions associated with the element midpoint; see Figure~\ref{fig:elements}. 
The basis functions associated with the three vertices are given by 
\begin{alignat*}{2}
&\Phi_{1,1} = \lambda_1\nabla^\perp \lambda_2 + \Phi_{B1}-2\Phi_{B2}, \qquad
&&\Phi_{1,2} = \lambda_2\nabla^\perp \lambda_1 + \Phi_{B1}+\Phi_{B2},\\
&\Phi_{2,1} = \lambda_2\nabla^\perp \lambda_3 -2\Phi_{B1}+\Phi_{B2}, \qquad
&&\Phi_{2,2} = \lambda_3\nabla^\perp \lambda_2 +\Phi_{B1}-2\Phi_{B2},\\
&\Phi_{3,1} = \lambda_1\nabla^\perp \lambda_3 -2\Phi_{B1}+\Phi_{B2}, \qquad
&&\Phi_{3,2} = \lambda_3\nabla^\perp \lambda_1 +\Phi_{B1}+\Phi_{B2}.
\end{alignat*}
For parallelograms, let $\xi_{ij} \in [0,1]$ denote the local coordinate on the edge $e_{ij}$ pointing from vertex $p_i$ to $p_j$. Following the construction in \cite{Zaglmayr06}, we define by
\begin{align*}
\phi_{B1} = (\lambda_1+\lambda_4)(\lambda_2+\lambda_3)\nabla^\perp\xi_{23} \quad\text{ and }\quad
\phi_{B1} = (\lambda_1+\lambda_2)(\lambda_3+\lambda_4)\nabla^\perp\xi_{12}
\end{align*}
two $H(\mathrm{div})$-bubble functions associated with the midpoint of the element. For any of the four vertices, we further define two basis functions by 
\begin{alignat*}{2}
&\phi_{1,1} = \lambda_2\nabla^\perp\xi_{23} + \phi_{B1},\qquad 
&&\phi_{1,2} = \lambda_3\nabla^\perp\xi_{23} + \phi_{B1},\\
&\phi_{2,1} = \lambda_3\nabla^\perp\xi_{34} + \phi_{B2},\qquad 
&&\phi_{2,2} = \lambda_4\nabla^\perp\xi_{34} + \phi_{B2},\\
&\phi_{3,1} = \lambda_4\nabla^\perp\xi_{41} - \phi_{B1},\qquad 
&&\phi_{3,2} = \lambda_1\nabla^\perp\xi_{41} - \phi_{B1},\\
&\phi_{4,1} = \lambda_1\nabla^\perp\xi_{12} - \phi_{B2},\qquad 
&&\phi_{4,2} = \lambda_2\nabla^\perp\xi_{12} - \phi_{B2}.
\end{alignat*}
Let us note that by construction, exactly two basis functions are associated to any of the quadrature points. Moreover, the basis functions vanish on all quadrature points except one. As a consequence, the local mass matrix corresponding to $(\cdot,\cdot)_{h,K}$ is block diagonal with $2 \times 2$ blocks. After assembling, the global mass-matrix is block-diagonal with each block corresponding to one of the quadrature points. The dimension of the individual blocks is determined by the number of degrees of freedom associated with that quadrature point; we refer to \cite{EggerRadu18,WheelerYotov06} for details.

\section{Numerical illustration} \label{sec:6}

For illustrating our resuts, we consider a simple test problem in two space dimensions, whose analytical solution is given by the plane wave
\begin{align*}
u_{ex}(x,y,t)=g(x-t) \binom{1}{0} 
\quad\text{with}\quad 
g(x)=2\exp(-50(x+1)^2). 
\end{align*}
We consider problem \eqref{sys4} with $d=0$ on the domain $\Omega =(0,1)^2$ with boundary and initial conditions obtained from the exact solution. 
In Table~\ref{tab:1}, we display the errors obtained by our second-order finite-element approximation with mass-lumping on a sequence of quasi-uniform but non-nested meshes with decreasing mesh size.
As predicted by our theoretical results, we observe second order convergence.
\begin{figure}[ht!]
  \centering
  \begin{minipage}{0.25\textwidth}\centering
  \setfloattype{table}
  \begin{tabular}{c||c|c}
    $h$ & $\tnorm \pi_h^1 u - u_h\tnorm$ & eoc\\
    \hline
    \hline
    \rule{0pt}{2.1ex}
    $2^{-3}$  & $0.270790$ & ---     \\
    $2^{-4}$  & $0.060266$ & $2.17$  \\
    $2^{-5}$  & $0.016328$ & $1.88$  \\
    $2^{-6}$  & $0.004343$ & $1.91$ \\
    $2^{-7}$  & $0.001046$ & $2.06$
  \end{tabular}
  \caption{\footnotesize Discrete error of the method. Time step was fixed at  $\tau=0.001$. \label{tab:1}}
  \end{minipage}
  \hfil
  \hfil
  \begin{minipage}{0.7\textwidth}\centering
    \centering
    \begin{center}
      \includegraphics[width=.30\textwidth]{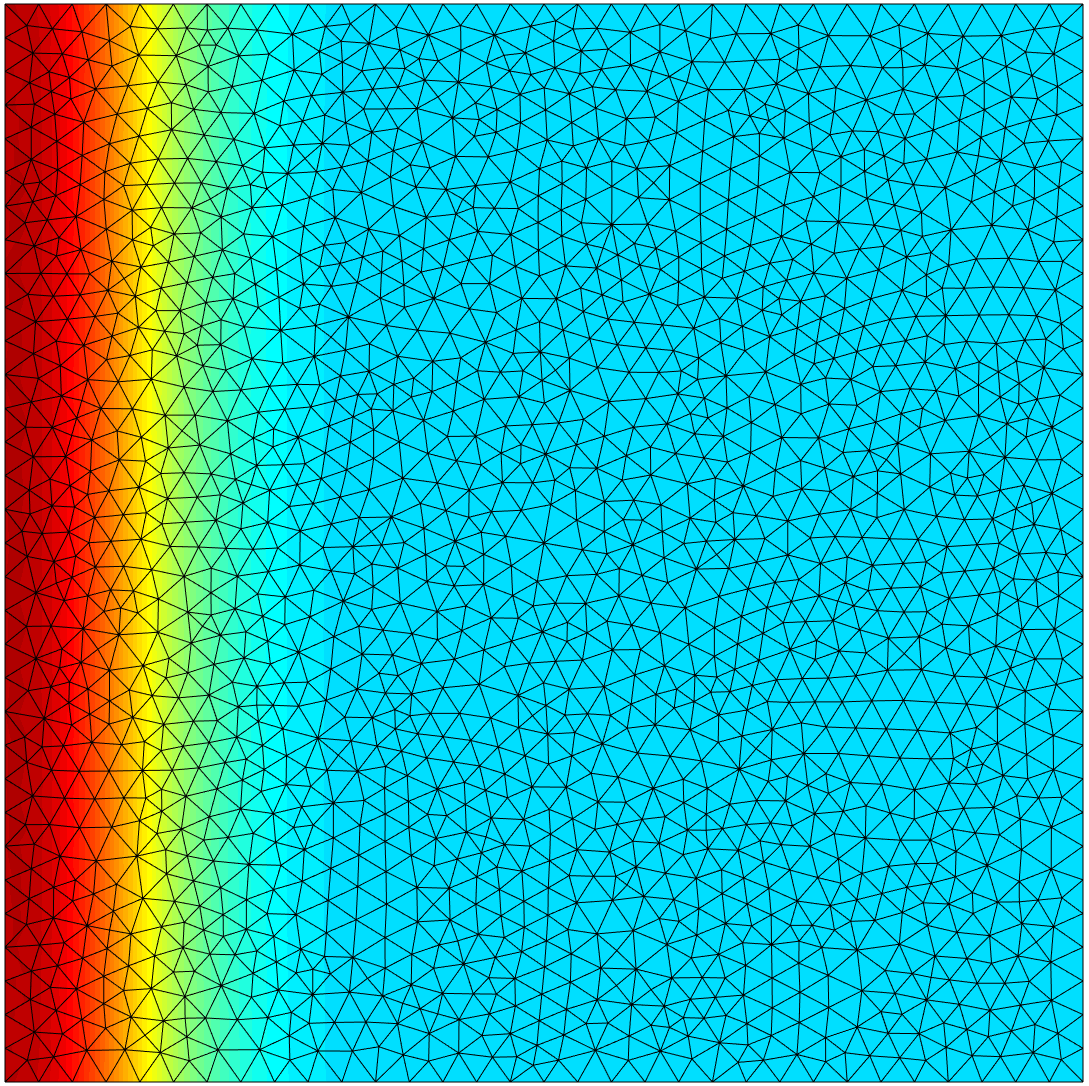}
      \hskip1ex
      \includegraphics[width=.30\textwidth]{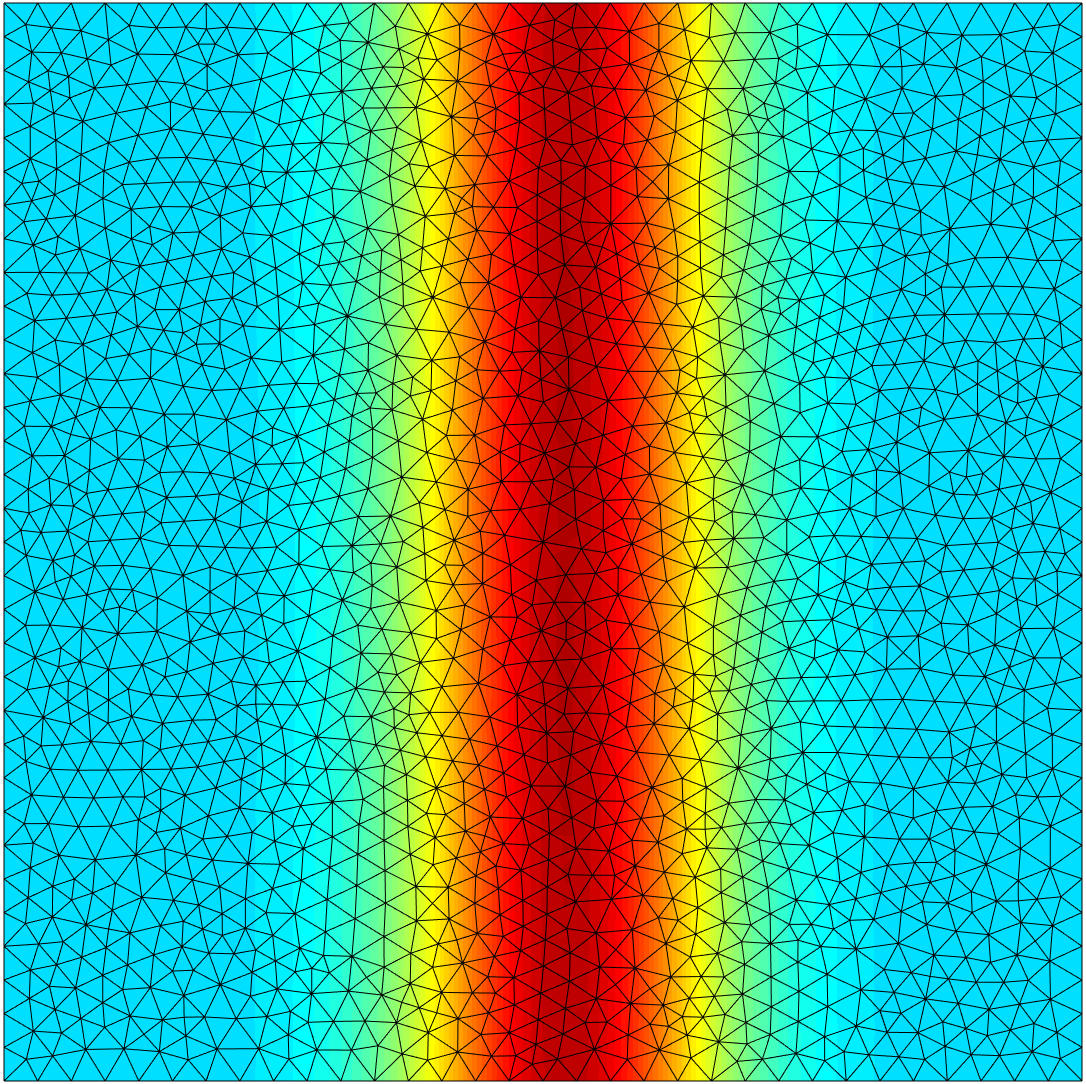}
      \hskip1ex
      \includegraphics[width=.30\textwidth]{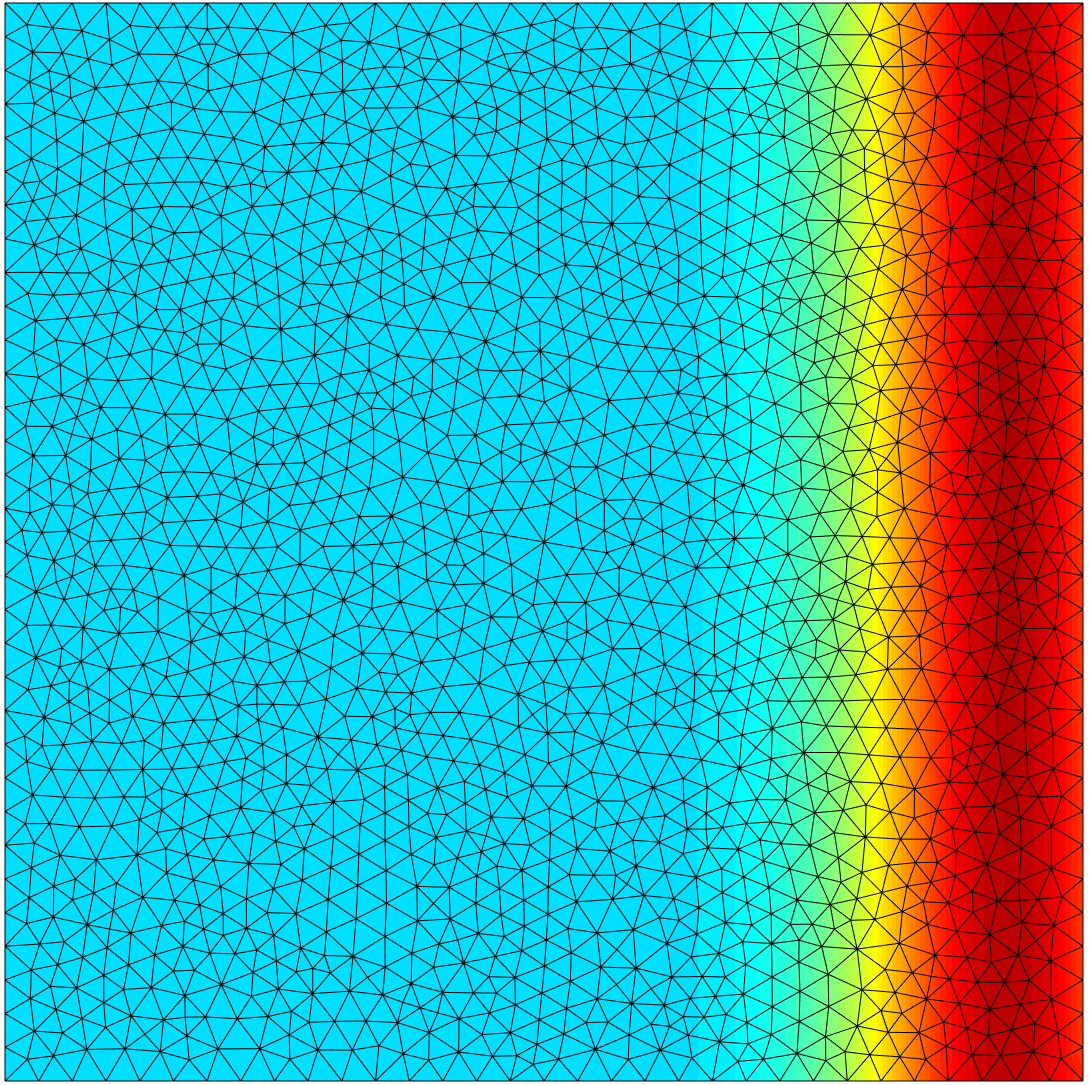} \\[1em]
      
      \captionsetup{justification=centering}
      \caption{\footnotesize Snapshots of the first component of $u$ at different time steps. \label{fig:pics}}
    \end{center}
  \end{minipage}
\end{figure}

Due to the mass lumping, time integration could be performed efficiently by the leapfrog scheme with time-step $\tau \approx h$. Since this method is second order accurate, this choice does not influence the overall convergence behavior; see \cite{Cohen02} for details. 

\begin{acknowledgement}
This work was supported by the German Research Foundation (DFG) via grants TRR~146 C3, TRR~154 C4, Eg-331/1-1, and through the ``Center for CE'' at TU Darmstadt.
\end{acknowledgement}

\vspace*{-2em}

\end{document}